\documentclass[12pt]{amsart}
\usepackage{amsmath,amssymb,
amsbsy,amsfonts,latexsym,amsopn,
amstext,cite,amsxtra,euscript,amscd,bm,mathabx}
\usepackage{url}

\usepackage[hmargin=3cm,vmargin=3cm]{geometry}

\usepackage[colorlinks,linkcolor=blue,
anchorcolor=blue,citecolor=blue,backref=page]{hyperref}
\usepackage{color}
\usepackage{graphics,epsfig}
\usepackage{graphicx}
\usepackage{float}
\usepackage{epstopdf}
\hypersetup{breaklinks=true}

\everymath{\displaystyle}

\usepackage[norefs,nocites]{refcheck}

\newtheorem{theorem}{Theorem}[section]
\newtheorem{lemma}[theorem]{Lemma}

\newcommand{\R}{\mathbb{R}}

\newcommand{\N}{\mathbb{N}}
\newcommand{\Z}{\mathbb{Z}}

\newcommand{\cP}{\mathcal{P}}

\newcommand{\D}{\mathcal{D}}
\newcommand{\E}{\mathbb{E}}

\newcommand{\PP}{\mathbb{P}}
\newcommand{\p}{\mathbf{p}}
\newcommand{\loc}{\text{loc}}
\newcommand{\as}{\text{ a.e }}

\title[]{Random Bernoulli measures in a random environment}

\keywords{Random Bernoulli measures, dimensions, Rajchman measure}
\subjclass[2020]{Primary 28A80}

\begin{document}

\author{Changhao Chen}
\address{School of Mathematical Sciences, Anhui University, Hefei 230601, China}
\email{chench@ahu.edu.cn}

\author{Zhi Liu}
\address{School of Mathematical Sciences, Anhui University, Hefei 230601, China}
\email{3297152015@qq.com}

\begin{abstract} 
We study random Bernoulli measures $\mu_{\mathbf{p}}$ in a random environment $\mathbf{p}=(p_1,p_2,\ldots)$, where $p_n$ are independent and identically distributed on $(0,1)$. For almost every environment $\mathbf{p}$, we establish several almost sure properties of $\mu_{\mathbf{p}}$: its local dimension, $L^q$ dimensions, Rajchman property, mutual singularity with any fixed measure, and a dichotomy for normal numbers of its typical points.
\end{abstract}

\maketitle

\section{Introduction}

Bernoulli measures are fundamental in probability theory, dynamical systems, and measure theory, and have been extensively studied. In this paper, we investigate a random version of Bernoulli measures, where the parameters are drawn from a random environment. We establish several basic almost sure properties of these random Bernoulli measures. 

To this end, we first recall the definition of the classical Bernoulli measure $\mu_p$ and collect some of its well-known properties concerning local dimension, $L^q$ dimensions, Rajchman property, mutual singularity, and normal numbers. We then present the precise construction of the random environment and state our main results in Theorem~\ref{thm}.

\subsection{Bernoulli measures}
\label{sec:BM}
We begin with the classical setting. For a fixed parameter $p\in(0,1)$, the Bernoulli measure $\mu_p$ on $[0,1)$ is defined as the probability measure satisfying
\[
\mu_p\bigl(I(x_1,\ldots,x_n)\bigr) = p^{\sum_{k=1}^n x_k} (1-p)^{n-\sum_{k=1}^n x_k}
\]
for every $n\in\mathbb{N}$ and every binary word $(x_1,\ldots,x_n)\in\{0,1\}^n$, where
\[
I(x_1,\ldots,x_n) = \left[ \sum_{k=1}^n \frac{x_k}{2^k},\; \sum_{k=1}^n \frac{x_k}{2^k} + \frac{1}{2^n} \right)
\]
is the dyadic interval of length $2^{-n}$. In particular, if $p=1/2$, then the Bernoulli measure $\mu_p$ will be Lebesgue  measure.  For more details and dimension properties of Bernoulli measures, see Heurteaux \cite{H2016}.

\subsection{Terminologies about measures}
Before introducing the random version of Bernoulli measures, we recall several notions that will be used to characterize their properties. Let $\mu$ be a measure on $\R$, and let $B(x, r)$ denote the ball centered at $x$ with radius $r$. The upper local dimension at $x$ is defined by
\[
\overline{\dim}_{\loc}(\mu, x) = \limsup_{r \to 0} \frac{\log \mu(B(x, r))}{\log r},
\]
and the lower local dimension at $x$ is defined by
\[
\underline{\dim}_{\loc}(\mu, x) = \liminf_{r \to 0} \frac{\log \mu(B(x, r))}{\log r}.
\]
If the upper and lower local dimensions are equal, their common value is called the local dimension at $x$, denoted by $\dim_{loc}(\mu, x)$.

Let $\mu$ be a measure on $[0,1)$  and  $q\in \R\setminus\{1\}$.  The upper $L^q$ spectrum and the upper $L^q$ dimension of the measure $\mu$ are respectively defined as 
\[
\overline{\tau}(\mu,q) = \limsup_{n\to\infty} \frac{\log \sum_{I \in \mathcal{D}_n} \mu(I)^q}{\log 2^{-n}}, \quad 
\overline{ D}(\mu,q) = \frac{\overline{\tau} (\mu,q)}{q-1}.
\]
Similarly, we have the lower $L^q$ spectrum $\underline{\tau}(\mu, q)$ and the lower $L^q$ dimension $\underline{ D}(\mu,q)$. If the corresponding upper dimension and lower dimension are equal, their common value are defined respectively as 
\[
\tau(\mu, q) \text{ and } D(\mu, q).
\]
For details of local dimension and $L^q$ dimensions and more discussion, see Falconer \cite[Chapter 17]{Falconer2003}.

Let $\mu$ be a finite Borel measure on $\mathbb{R}^d$. The Fourier transform of $\mu$ at $\xi\in \R^d$ is defined as
\[
\widehat{\mu}(\xi)=\int_{\mathbb{R}} e^{-2\pi i x\xi}\,d\mu(x).
\]
The Fourier transform is a classical concept in harmonic analysis and its applications. More recently, it has gained importance also in geometric measure theory due to its various connections and application in dimension theory, see Mattila \cite{Mattila2015} for more details. We say that the measure $\mu$ is a Rajchman measure if $\widehat{\mu}(\xi) \to 0$ as $ |\xi| \to \infty$. The Rajchman property alone is an important regularity characterisation of the measure being intimately related to various analytic and geometric properties of the measure and its support, see e.g. the survey Lyons \cite{Lyons1995}.

Two Radon measures $\eta$ and $\mu$ are mutually singular, written $\eta \perp \mu$, if there exists a Borel set $B\subseteq \mathbb{R}^n$ such that
\[
\eta(B)=\mu(\mathbb{R}^n\setminus B)=0.
\]
For more details on singularity property of measures, see Mattila \cite[Chapter 2]{Mattila1995}.

A real number $x\in[0,1)$ is called normal if for each integer number $b\ge 2$, every digit in its base-$b$ expansion appears with asymptotic density $1/b$, and every block of digits of length $k$ appears with density $1/b^k$. The Borel normal number theorem  claims that  almost every number $x\in \R$ is normal in the sense of Lebesgue measure. For details and more discussion see Chung \cite[Chapter 5]{Chung}.

We collect some known properties of Bernoulli measure in the following which will be extended to random Bernoulli measures in Theorem \ref{thm}. 
 
\begin{itemize}

\item[(1)] Let $p\in (0,1)$ and $h(p) = -p\log p - (1-p)\log (1-p)$, then  for $\mu_p \as x$,
\[
\dim_{\loc}(\mu_p, x)=h(p).
\]

\item[(2)] Let $p\in (0,1)$ and $q\in \R\setminus \{1\}$, then 
\[
D(\mu, q)= -\frac{1}{q-1} \log ((1-p)^q+p^q).
\]

\item[(3)] If $p\neq 1/2$, then $\mu_p$ is a not a Rajchman measure.

\item[(4)] For any $p\neq q$, the Bernoulli measures $\mu_p, \mu_q$ are mutually singular. 

\item[(5)] If $p=1/2$, then $\mu_{1/2}$ is Lebesgue measure and hence  $\mu_{1/2} \as x $ is normal (Borel normal number theorem).

\item[(6)] If $p\neq 1/2$, then $\mu_p \as x$ is not normal.

\end{itemize}

With these definitions in hand, we now turn to the random setting.

\subsection{Random Bernoulli measures}

We first recall the definition of an inhomogeneous Bernoulli measure. Let $\p = (p_1, p_2, \ldots)$ be a sequence with $p_n \in (0,1)$. Define 
\begin{equation}
\label{eq:mupn}
\mu_{\p, n}\bigl(I(x_1,\ldots,x_n)\bigr) = \prod_{i=1}^n p_i^{x_i} (1-p_i)^{1-x_i},
\end{equation}
where $I(x_1,\ldots,x_n)$ is the dyadic interval of length $2^{-n}$ determined by the binary digits $x_1,\ldots,x_n$ as in Section~\ref{sec:BM}. The sequence $\mu_{\p,n}$ converges weakly to a unique probability measure $\mu_\p$ on $[0,1)$, called the inhomogeneous Bernoulli measure with parameter sequence $\p$.

We now introduce randomness. Let $\nu$ be a Borel probability measure on $(0,1)$. Consider an i.i.d. sequence $\p = (p_1, p_2, \ldots)$ with common distribution $\nu$, i.e., the $p_n$ are independent and $\mathbb{P}(p_n \in A) = \nu(A)$ for each $n\in\mathbb{N}$. The sample space is 
\[
\Omega = (0,1)^{\mathbb{N}} = \{(p_1, p_2, \ldots) : p_k \in (0,1),\; k\in\mathbb{N}\},
\]
equipped with the product probability measure $\PP = \nu^{\mathbb{N}}$. For each $\p\in\Omega$, we obtain an inhomogeneous Bernoulli measure $\mu_\p$. We call $\mu_\p$ a random Bernoulli measure in a random environment $\nu$.

The following theorem summarizes the main results of this paper.

\begin{theorem}\label{thm} 
Using the above notations, we have

\begin{itemize}
\item[(1)]  For $\PP \as  \p\in \Omega$ and $\mu_\p \as  x\in [0,1]$, we have
\[
\dim_{\loc} (\mu_\p, x) =\int h(p) d\nu (p).
\]

\item[(2)] For $\PP \as  \p\in \Omega$ and \textbf{all} $q\in \R\setminus \{1\}$, the $L^q$ dimension of $\mu_\p$ equals 
\[
D(\mu_\p, q)=-\frac{1}{q-1}\int \log ((1-p)^q+p^q) d\nu(p).
\]  

\item[(3)] Suppose that $\nu\neq \delta_{1/2}$, then for $\PP \as  \p\in \Omega$ the measure $\mu_\p$ is a non-Rajchman measure.

\item[(4)] Let $\eta \in \cP([0,1])$ be an arbitrary fixed probability measure. Suppose that $\nu$ is not a Dirac measure, then for $\PP \as  \p\in \Omega$, 
\[
\mu_\p \perp \eta.
\]

\item[(5)] If $\int p d\nu(p)=1/2$, then for $\PP \as  \p\in \Omega$ and $\mu_\p \as  x\in [0,1]$ is normal.

\item[(6)] If $\int p d\nu(p)\neq 1/2$, then for $\PP \as  \p \in \Omega$ and $\mu_\p \as  x\in [0,1]$ is not normal.

\end{itemize}
\end{theorem}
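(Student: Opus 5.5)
We treat the six parts separately. The common device is to view $\mu_\p$ as the law of $x=\sum_k x_k 2^{-k}$, where the digits $x_k$ are independent Bernoulli$(p_k)$. Expectations of digit statistics under the annealed measure $\PP\otimes\mu_\p$ reduce to averages against $\nu$.

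\emph{Parts (1) and (2).} For (1), write
\[
-\log\mu_\p(I_n(x))=\sum_{k=1}^n -\bigl(x_k\log p_k+(1-x_k)\log(1-p_k)\bigr).
\]
Under $\PP\otimes\mu_\p$ the summands are i.i.d. with mean $\int h\,d\nu$. If $\int h\,d\nu$ is finite this is automatic, since $h\le\log 2$. The strong law of large numbers then gives $-\frac1n\log_2\mu_\p(I_n(x))\to\int h\,d\nu$ (in base-2 logarithms), for $\PP$-a.e. $\p$ and $\mu_\p$-a.e. $x$, by Fubini. To pass from dyadic intervals to balls, we use $B(x,2^{-n})\subset I_{n-m}(x)\cup$ neighbours. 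The standard remedy is to note that $\mu_\p$-a.e. $x$ is not too close to dyadic endpoints. By Borel--Cantelli, the event that the digits $x_{n+1},\dots,x_{n+\lfloor\epsilon n\rfloor}$ are all equal has probability at most $2(\max\text{digit prob})^{\epsilon n}$. This needs a uniform bound. To avoid assuming $\nu$ is supported away from $0,1$, we use instead that the count of $k\le N$ with $p_k\in[\delta,1-\delta]$ has positive density. This yields summable probabilities, so the ball $B(x,2^{-n})$ is contained in $I_{n-o(n)}(x)$, and the lower and upper bounds agree.

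For (2), $\sum_{I\in\D_n}\mu_\p(I)^q=\prod_{k=1}^n\bigl(p_k^q+(1-p_k)^q\bigr)$ by the product structure. Taking logs, the strong law gives the limit for each fixed $q$, provided $\log(p^q+(1-p)^q)$ is $\nu$-integrable. For $q>0$ this function is bounded. For $q<0$ it may fail to be integrable, in which case both sides are $+\infty$, with the convention handled via the SLLN for nonnegative variables. To get \textbf{all} $q$ simultaneously, apply the SLLN on a countable dense set of $q$. Then use that $q\mapsto\frac1n\sum_k\log(p_k^q+(1-p_k)^q)$ is convex in $q$: pointwise convergence of convex functions on a dense set implies convergence everywhere on the interior of the finiteness domain.

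\emph{Part (3).} Take $\xi=2^m$. Then
\[
\widehat{\mu_\p}(2^m)=\prod_{k\ge1}\bigl(1-p_{m+k}+p_{m+k}e^{-2\pi i 2^{m-k}}\bigr),
\]
and the factors with $k\le m$ equal $1$. So $|\widehat{\mu_\p}(2^m)|=\prod_{j\ge1}|1-p_{m+j}+p_{m+j}e^{-\pi i2^{1-j}}|$. The first factor is $|1-2p_{m+1}|$, and the tail is bounded below by $c\prod_{j\ge2}(1-Cp(1-p)4^{-j})>c'>0$. Since $\nu\ne\delta_{1/2}$, some $\epsilon>0$ gives $\nu(|p-1/2|>\epsilon)>0$. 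Hence infinitely often $|1-2p_{m+1}|>2\epsilon$ by Borel--Cantelli, and so $\limsup_m|\widehat{\mu_\p}(2^m)|>0$.

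\emph{Part (4).} This is the main obstacle. Let $A_\p$ be the set where $\frac1n\sum_{k\le n}(x_k-p_k)\to0$; it has full $\mu_\p$ measure. A direct approach is to show $\E\,\mu_\p\otimes$-type statements fail. Instead we use Kakutani's dichotomy idea combined with Fubini. Fix $\eta$ and consider $\int\!\!\int$ of the density. We argue that for $\eta$-a.e. $x$, $\PP(x\in A_\p')=0$ for a suitable full-$\mu_\p$-measure set $A'_\p$. Take $A'_\p=\{x:\frac1n\sum_{k\le n}(x_k-p_k)g(p_k)\to0\}$ for a bounded $g$ chosen so that $\mathrm{Var}_\nu$ is nonzero; since $\nu$ is not Dirac, $g(p)=p-\bar p$ works. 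For fixed $x$, the variables $(x_k-p_k)(p_k-\bar p)$ are independent under $\PP$ with means $\E[(x_k-p)(p-\bar p)]=-\mathrm{Var}_\nu$, by independence. So by the SLLN the average tends to $-\mathrm{Var}_\nu(p)\neq0$, and hence $x\notin A'_\p$ for $\PP$-a.e. $\p$. By Fubini, $\E\,\eta(A'_\p)=0$, so $\eta(A'_\p)=0$ a.s., while $\mu_\p(A'_\p)=1$ by Kolmogorov's SLLN for bounded centered independent summands. This gives $\mu_\p\perp\eta$.

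\emph{Parts (5) and (6).} Under the annealed measure $\PP\otimes\mu_\p$, the digits $x_k$ are i.i.d. Bernoulli$(\bar p)$ with $\bar p=\int p\,d\nu$. If $\bar p=1/2$, the annealed law of $x$ is Lebesgue measure. Borel's theorem then gives normality almost surely, and Fubini transfers this to $\PP$-a.e. $\p$ and $\mu_\p$-a.e. $x$. If $\bar p\neq1/2$, the digit frequency of $1$ in base $2$ converges annealed-a.s. to $\bar p\ne1/2$ by the SLLN, so $x$ fails to be normal, and Fubini again concludes.
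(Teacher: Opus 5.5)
Your proposal is correct, and in parts (3) and (4) it takes a genuinely different route from the paper.

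For (3), the paper proves a general quantitative lemma: a mass discrepancy $c$ between two intervals of equal length forces $\sup_{j\neq 0}|\widehat{\mu}(j)|\gtrsim c^2$, shown via mollified indicators. It applies this to the push-forwards of $\mu_\p$ under $x\mapsto 2^k x \bmod 1$, which only yields some unspecified frequencies $2^k j_k$. You instead evaluate the infinite product at $\xi=2^m$. Since $|1-p+pe^{i\theta}|\ge|\cos(\theta/2)|$ and $\prod_{j\ge 2}\cos(\pi 2^{-j})=2/\pi$, this gives the explicit bound $|\widehat{\mu_\p}(2^m)|\ge \frac{2}{\pi}|1-2p_{m+1}|$. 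That is shorter and more informative, but it is tied to the product structure of $\mu_\p$. Your first display mixes indices: it should pair $p_k$ with $e^{-2\pi i 2^{m-k}}$, or $p_{m+k}$ with $e^{-2\pi i 2^{-k}}$.

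For (4), the paper combines the dyadic martingale density lemma with a Kakutani--Hellinger bound $\E\sum_{I\in\D_n}\eta(I)^{1/2}\mu_\p(I)^{1/2}\le (a^2+b^2)^{n/2}$, where $a^2+b^2<1$ by strict Jensen. You replace all of this with an explicit separating set built from the statistic $\frac1n\sum_{k\le n}(x_k-p_k)(p_k-\bar p)$. This statistic tends to $0$ under $\mu_\p$ for every $\p$, and to $-\mathrm{Var}_\nu(p)\neq 0$ under $\PP$ for every fixed $x$. The weight $p_k-\bar p$ is exactly what makes the $\PP$-mean independent of the digit $x_k$. Fubini then gives $\E\,\eta(A'_\p)=0$ with no density theorem needed. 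The paper's route, in exchange, gives exponential decay of the expected affinity at each dyadic level.

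In (1), (2), (5), (6) you argue essentially as the paper does. Your annealed viewpoint packages the paper's two-step law of large numbers and its Lebesgue-null-set lemma more cleanly: the pairs $(p_k,x_k)$ are i.i.d., and the $x$-marginal of $\PP\otimes\mu_\p$ is $\mu_{\bar p}$. You also supply the ball-versus-dyadic comparison and the non-integrable case $q<0$, which the paper leaves implicit.

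Two small precisions:
\begin{itemize}
\item[(a)] In (1), use the SLLN limit $\frac1N\#\{k\le N: p_k\in[\delta,1-\delta]\}\to\nu([\delta,1-\delta])>0$, not merely positive lower density. This is what guarantees order $\epsilon n$ good indices in the window $(n,(1+\epsilon)n]$.
\item[(b)] In (2), convexity only handles the interior of the finiteness domain. Treat its boundary point, and the region where the integral is $+\infty$, via the monotonicity of $q\mapsto\log(p^q+(1-p)^q)$, or simply add that one point to your countable set.
\end{itemize}
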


\section{Proofs of main results}

This section is devoted to the proofs of Theorem~\ref{thm}. We will prove each part in a separate subsection.

\subsection{Almost surely local dimension}

We begin with the proof of Theorem~\ref{thm}(1). First, we recall the following version of the law of large numbers, which will be used repeatedly. Let $\{X_n\}$ be a sequence of independent random variables such that $|X_n|\le M$ for all $n$. 
Then almost surely 
\[
\frac{1}{n}\sum_{k=1}^n\bigl(X_k-\mathbb{E}(X_k)\bigr)\rightarrow 0,\text{ as } n\to\infty.
\]

We next introduce the dyadic version of the local dimension of a measure. 
Let $\D_n, n\in \N$ be the $n$-th dyadic partition of the interval $[0,1)$, that is 
\[
\D_n=\left \{ \left [\frac{k}{2^{n}}, \frac{k+1}{2^{n}} \right ): k=0, 1, \ldots, 2^{n}-1 \right \}.
\]
For dyadic expansion  $x=0.x_1x_2\ldots\in[0,1)$, we require that $x_n \neq 1$ for all sufficiently large  $n$. Hence every $x\in [0,1)$ has a unique expansion. Let $I_n(x)=I(x_1, \ldots, x_n)\in \D_n$ be the unique interval which contains $x$. For computing local dimensions of measures, it is often convenient to take dyadic intervals instead of balls. This is due to the following fact. For any measure $\eta $ on $[0,1)$ and $\eta \as x$ one has 
\[
\overline{\dim}_{\loc}(\eta, x)= \limsup_{n\rightarrow \infty} \frac{\log \eta(I_n(x))}{\log 2^{-n}} \text{ and } \underline{\dim}_{\loc}(\eta, x)= \liminf_{n\rightarrow \infty} \frac{\log \eta(I_n(x))}{\log 2^{-n}}.
\]

\begin{lemma}\label{inhomogeneous}
Let $\p=(p_1, p_2, \ldots )\in (0,1)^{\N}$ and $\mu_\p$ be the corresponding  inhomogeneous Bernoulli measure. Then for $\mu_\p \as x$,
\[
\lim_{n\to\infty}\left( \frac{\log\mu_\p(I_n(x))}{\log 2^{-n}} - \frac{1}{n}\sum_{i=1}^n h(p_i) \right) = 0.
\]
\end{lemma}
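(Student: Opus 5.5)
The plan is to turn $\log\mu_\p(I_n(x))$ into a sum of independent random variables on the probability space $([0,1),\mu_\p)$, and then apply a strong law of large numbers that needs only bounded variances, not bounded variables.

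By \eqref{eq:mupn} and weak convergence, $\mu_\p(I(x_1,\ldots,x_n))=\prod_{i=1}^n p_i^{x_i}(1-p_i)^{1-x_i}$ for every dyadic interval. This identity is exact because the boundary points form a countable set, which $\mu_\p$ does not charge. Under $\mu_\p$ the digit maps $x\mapsto x_i$ are therefore independent, with $\mu_\p(x_i=1)=p_i$. The non-uniqueness of expansions affects only a $\mu_\p$-null set. Define
\[
Y_i(x)=x_i\log p_i+(1-x_i)\log(1-p_i),
\]
so that $\log\mu_\p(I_n(x))=\sum_{i=1}^n Y_i(x)$. The $Y_i$ are independent with $\E_{\mu_\p}Y_i=-h(p_i)$.

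Dividing by $\log 2^{-n}=-n\log 2$, the claim reduces to $\frac1n\sum_{i=1}^n\bigl(Y_i+h(p_i)\bigr)\to0$ for $\mu_\p$ a.e.\ $x$. I am reading $h$ in the normalization that matches $\log 2^{-n}$, i.e.\ logarithms to base $2$, as the statement implicitly does; with natural logarithms one gets the extra factor $1/\log 2$ in both terms.

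The obstacle is that the version of the law of large numbers recalled above requires $|X_n|\le M$. This fails here: $\p$ is arbitrary, so $\log p_i$ may be unbounded when $p_i$ approaches $0$ or $1$. I would instead bound the variances uniformly:
\[
\operatorname{Var}(Y_i)\le \E Y_i^2=p_i\log^2p_i+(1-p_i)\log^2(1-p_i)\le 2\sup_{0<t<1}t\log^2 t=8e^{-2}.
\]
This is uniform in $i$ and independent of $\p$. Then $\sum_i \operatorname{Var}(Y_i)/i^2<\infty$, and Kolmogorov's strong law (equivalently, Kolmogorov's two-series theorem together with Kronecker's lemma) gives $\frac1n\sum_{i=1}^n(Y_i-\E Y_i)\to0$ $\mu_\p$-almost surely. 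This is exactly the desired limit.

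Finally I would note that the proof uses only the dyadic quantity $\log\mu_\p(I_n(x))$, so the dyadic-versus-ball comparison recalled before the lemma is not needed here; it enters only when deducing Theorem~\ref{thm}(1). The main point to get right is the uniform second-moment bound, which replaces the boundedness assumption.
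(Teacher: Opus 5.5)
Your proof follows the same route as the paper's. Under $\mu_\p$ the digits are independent with $\mu_\p(x_i=1)=p_i$, $\log\mu_\p(I_n(x))=\sum_{i\le n}Y_i$ with $\E Y_i=-h(p_i)$, and a strong law of large numbers finishes.

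The one real difference is which strong law you use. The paper applies the version it recalled, which assumes $|X_n|\le M$. That hypothesis fails here, since $|Y_i|\le\max\{|\log p_i|,|\log(1-p_i)|\}$ is unbounded as soon as the $p_i$ accumulate at $0$ or $1$. The lemma allows this, and it also happens $\PP$-a.s.\ in Theorem~\ref{thm}(1) whenever $0$ or $1$ lies in the closed support of $\nu$. Your uniform second-moment bound $\E Y_i^2\le 2\sup_{0<t<1}t\log^2 t$, combined with Kolmogorov's criterion $\sum_i\operatorname{Var}(Y_i)/i^2<\infty$, is exactly the missing justification, so your version is the rigorous one. With base-$2$ logarithms the constant is $8e^{-2}/(\ln 2)^2$ rather than $8e^{-2}$, which is immaterial.

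One side remark of yours is inaccurate: $\mu_\p$ can charge dyadic endpoints. If $\sum_i p_i<\infty$, then $\mu_\p(\{0\})=\prod_i(1-p_i)>0$, every dyadic rational is an atom, and non-uniqueness of expansions is not a $\mu_\p$-null event. The product formula for half-open dyadic intervals survives this, because these atoms come from digit sequences ending in $0$'s, which agree with the chosen expansion convention. What is really needed is $\prod_i p_i=0$, so that almost no digit sequence ends in $1$'s. Otherwise the weak limit charges the point $1$ and the formula fails, e.g.\ $\mu_\p([0,\tfrac12))=(1-p_1)\bigl(1-\prod_{i\ge 2}p_i\bigr)$.

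This condition holds $\PP$-a.s.\ in the i.i.d.\ setting, and the paper tacitly assumes it when it describes $\mu_\p$ as a measure on $[0,1)$. The clean way to phrase your first step is to take $\mu_\p$ to be the image of $\bigotimes_i\mathrm{Bernoulli}(p_i)$ under $\omega\mapsto\sum_i\omega_i2^{-i}$ and work with the coordinates $\omega_i$ directly.
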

\begin{proof}
Let $x\in[0,1)$ and let $I_n(x)$ be the $n$-th dyadic interval containing $x$. Then
\[
\begin{aligned}
\frac{\log\mu_\p(I_n(x))}{\log 2^{-n}}
&= -\frac{1}{n}\log\prod_{i=1}^n p_i^{x_i}(1-p_i)^{1-x_i} \\
&= -\frac{1}{n}\sum_{i=1}^n \bigl(x_i\log p_i + (1-x_i)\log(1-p_i)\bigr).
\end{aligned}
\]
Taking expectation with respect to $\mu_\p$, we obtain
\[
\begin{aligned}
\mathbb{E} \left ( x_i\log p_i + (1-x_i)\log(1-p_i) \right )
&= p_i\log p_i + (1-p_i)\log(1-p_i) \\
&= -h(p_i).
\end{aligned}
\]
The law of large numbers implies that for $\mu_\p \as x$, 
\[
\lim_{n\to\infty}\left( \frac{\log\mu_\p(I_n(x))}{\log 2^{-n}} - \frac{1}{n}\sum_{i=1}^n h(p_i) \right) = 0,
\]
which completes the proof.
\end{proof}

\begin{proof}[Proof of Theorem \ref{thm} (1)]
The random variables  $p_1, p_2, \ldots$  are  independent and identically distributed. The law of large numbers implies that for  $\PP \as \p=(p_1, p_2, \ldots)$, 
\[
\frac1n\sum_{i=1}^n h(p_i)\to\mathbb{E}(h(p))=\int h(p)\,d\nu(p), \text{ as } n \rightarrow \infty.
\]
Combining with Lemma \ref{inhomogeneous}, for $\mathbb{P} \as \p$ and $\mu_\p \as x\in[0,1)$, we derive
\[
\frac{\log\mu_\p(I_n(x))}{\log 2^{-n}}\to\int h(p)\,d\nu(p), \text{ as } n \rightarrow \infty,
\]
which finishes the proof.
\end{proof}

\subsection{Almost surely $L^q$ dimensions}

We now prove Theorem~\ref{thm}(2). The following elementary lemma will be useful.

\begin{lemma}
\label{lem:easy}
Let $\p=(p_1, p_2, \ldots)\in (0,1)^{\N}$. Suppose that for any rational number $q$,
\[
\frac{1}{n} \sum_{k=1}^{n}\log ((1-p_k)^{q}+p_k^{q})
\longrightarrow \int \log ((1-p)^q+p^q) d\nu(p), \,\, n \rightarrow \infty.
\]
Then this identity holds for all $q\in \R$.
\end{lemma}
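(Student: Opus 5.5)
The plan is to use only two structural facts about $f_p(q):=\log\bigl((1-p)^q+p^q\bigr)$: monotonicity in $q$ and a simple bound on its size. Since $0<p,1-p<1$, both $p^q$ and $(1-p)^q$ are nonincreasing in $q$, so $q\mapsto f_p(q)$ is nonincreasing for every $p\in(0,1)$. Put
\[
F_n(q)=\frac1n\sum_{k=1}^n f_{p_k}(q),\qquad F(q)=\int f_p(q)\,d\nu(p)\in(-\infty,+\infty].
\]
Both are then nonincreasing in $q$. Fix a real $q$ and rationals $r<q<s$. Monotonicity and the hypothesis give
\[
F(s)\le\liminf_{n\to\infty} F_n(q)\le\limsup_{n\to\infty} F_n(q)\le F(r).
\]
It therefore suffices to show $F(s)\to F(q)$ as $s\downarrow q$ and $F(r)\to F(q)$ as $r\uparrow q$, in the extended sense where needed. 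The case $q=0$ is trivial, since $f_p(0)=\log 2$ for every $p$.

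Next I bound the size of $f_p$. Write $m(p)=\min(p,1-p)\in(0,1/2]$. For $q>0$ we have $q\log\max(p,1-p)\le f_p(q)\le\log 2$, and $\max(p,1-p)\ge1/2$, so $|f_p|\le C(1+|q|)$ uniformly in $p$ on compact $q$-sets. For $q<0$ we have
\[
|q|\log\frac1{m(p)}\le f_p(q)\le |q|\log\frac1{m(p)}+\log 2 .
\]
Consequently, on $(0,\infty)$ the functions $f_p$ are locally uniformly bounded, and dominated convergence gives continuity of $F$. On $(-\infty,0)$ there are exactly two alternatives: either $\int\log(1/m)\,d\nu<\infty$, or $F\equiv+\infty$ on all of $(-\infty,0)$. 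This dichotomy is the key observation. Without it, $F$ might be finite at some $q$ but infinite just to its left, and the sandwich would fail from above.

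In the finite case, fix a compact $q$-interval inside $(-\infty,0)$. On it the $f_p$ are dominated by $C(1+\log(1/m(p)))$, which is $\nu$-integrable. Since $q\mapsto f_p(q)$ is continuous, dominated convergence gives continuity of $F$ there. Near $q=0$ the same two-sided bounds give domination by an integrable function, so $F$ is continuous on all of $\R$ and the sandwich closes.

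In the infinite case, take $q<0$ and a rational $s$ with $q<s<0$. Then $\liminf_n F_n(q)\ge F(s)=+\infty$, so $F_n(q)\to+\infty=F(q)$. For $q>0$ the previous argument applies unchanged. For $q=0$ the claim is trivial, although one may also note that it is then the boundary point of the two regimes.

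The main obstacle is the behaviour for negative $q$ when $\nu$ puts substantial mass near $0$ or $1$: the limit may be infinite, and continuity of $F$ is not automatic. The linear-in-$|q|$ two-sided bound above is precisely what resolves this, because it shows that finiteness of $F$ on $(-\infty,0)$ is an all-or-nothing property.
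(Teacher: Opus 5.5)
Your argument is correct and complete. All the bounds check out: $f_p$ is nonincreasing in $q$; $|f_p(q)|\le(1+q)\log 2$ for $q\ge 0$; and $|q|\log(1/m(p))\le f_p(q)\le |q|\log(1/m(p))+\log 2$ for $q<0$. From these you get the sandwich $F(s)\le\liminf_n F_n(q)\le\limsup_n F_n(q)\le F(r)$ and continuity of $F$ by dominated convergence. The case split according to whether $\int\log\bigl(1/\min(p,1-p)\bigr)\,d\nu$ is finite correctly covers the situation where the limit is $+\infty$ for every negative $q$.

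The paper states this lemma without any proof (it only calls it ``elementary''), so there is no written argument to match. The route the authors most likely had in mind is convexity. Each $q\mapsto\log\bigl((1-p)^q+p^q\bigr)$ is a log-sum-exp of linear functions of $q$, hence convex, so the $F_n$ and $F$ are convex. A convex function that is finite on an open interval is automatically continuous there. Moreover, convex functions converging to finite limits on a dense set converge at every point of the interval, even locally uniformly.

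That approach avoids your explicit domination bounds, but it only operates where the limit is finite. So it still needs your dichotomy: for $q<0$, finiteness of $F(q)$ does not depend on $q$, and the only possible boundary point between the finite and infinite regimes is $q=0$, where everything is trivial.

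Your monotone-sandwich argument is just as short. It handles the extended-valued case ($F\equiv+\infty$ on $(-\infty,0)$) explicitly rather than by excluding it, which matters when $\nu$ puts heavy mass near $0$ or $1$.
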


\begin{proof}[Proof of Theorem \ref{thm} (2)]
For $\p=(p_1, p_2, \ldots)\in (0,1)^{\N}$, $q\in \R$ and $n\in \N$, let 
\[
S_n(\mu_\p, q)=\sum_{I\in \D_n} \mu_\p(I)^{q}.
\]
Then for $n>1$, we have 
\[
S_n(\mu_\p, q)=\sum_{J\in \D_{n-1}} \mu_\p(J)^{q}((1-p_n)^{q}+p_n^{q})=\prod_{k=1}^{n}((1-p_k)^{q}+p_k^{q}),
\]
and hence 
\begin{equation}
\label{eq:log}
\frac{1}{n}\log S_n(\mu_\p, q)= \frac{1}{n} \sum_{k=1}^{n}\log ((1-p_k)^{q}+p_k^{q}).
\end{equation} 
Since the rational numbers are countable and the random variables  $p_1, p_2, \ldots$  are  independent and identically distributed, the law of large numbers implies that for $\PP \as \p=(p_1, p_2, \ldots)$ and any rational number $q$, 
\begin{align*}
-\frac{1}{n}\log S_n(\mu_\p, q)&=-\frac{1}{n} \sum_{k=1}^{n}\log ((1-p_k)^{q}+p_k^{q})\\
&\longrightarrow -\int \log ((1-p)^q+p^q) d\nu(p), \,\, n\rightarrow \infty.
\end{align*}
Together with Lemma \ref{lem:easy} and the definition of $D(\mu, q)$, we obtain the desired result.
\end{proof}

\subsection{Almost surely non-Rajchman measure}
Next, we prove Theorem~\ref{thm}(3). It is known that a probability measure $\mu$ on $[0,1]$ is Lebesgue measure if and only if all its non-zero Fourier coefficients vanish. Thus any non-Lebesgue measure has at least one non-zero Fourier coefficient. This qualitative observation, however, is not enough for our purpose. We need a quantitative relation between the Fourier coefficients and the mass discrepancy of $\mu$, which is given by the following lemma.

\begin{lemma}\label{Fourier}
Let \(\mu\in\mathcal{P}([0,1])\), and let \(I,J\subseteq[0,1]\) be intervals with \(|I|=|J|\). 
If \(|\mu(I)-\mu(J)|\ge c>0\), then
\[
\sup_{j\in\mathbb{Z},\,j\neq 0}\bigl|\widehat{\mu}(j)\bigr|\gtrsim c^2.
\]
\end{lemma}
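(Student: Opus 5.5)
The idea is to compare $\mu$ with Lebesgue measure $m$ through a smooth approximation of $1_I - 1_J$. Put $f = 1_I - 1_J$. Then $\int f\,dm = 0$, because $|I|=|J|$, while $\left|\int f\,d\mu\right| \ge c$. We view $\mu$ as a measure on the circle $\mathbb{T}=\mathbb{R}/\mathbb{Z}$. The endpoint $1$ is identified with $0$, but this only affects point masses at the endpoints. If needed, we first shift $I,J$ slightly or use half-open intervals.

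\textbf{Step 1: mollify.} Take a nonnegative smooth bump $\phi_\delta$ on $\mathbb{T}$ of width $\delta$ with $\int\phi_\delta = 1$, and set $g = f * \phi_\delta$. Then $g$ agrees with $f$ except on the $\delta$-neighbourhoods of the (at most four) endpoints of $I$ and $J$, and $|g|\le 1$. This gives
\[
\Bigl|\int g\,d\mu - \int f\,d\mu\Bigr| \le 2\,\mu(E_\delta),
\]
where $E_\delta$ is the union of the four intervals of length $2\delta$ around the endpoints.

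This is the main obstacle: $\mu$ may put a lot of mass near the endpoints. To handle it, we shrink or shift the intervals instead of fixing them. Replace $I,J$ by $I'=I+t$ and $J'=J+t$ with a common shift $t$, or trim both by the same amount at the ends, so that the endpoints lie in regions of small $\mu$-mass. Some care is needed to keep $|\mu(I')-\mu(J')|\ge c/2$. Partition the region near each endpoint into about $10/c$ disjoint candidate windows of size $\delta\sim c^2$. A pigeonhole argument gives a window of $\mu$-mass at most $c/10$, and the discrepancy changes by at most the removed mass. Doing this on the positive side yields $\left|\int g\,d\mu\right| \ge c/2$.

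\textbf{Step 2: Fourier expansion.} Since $\int g\,dm = \widehat g(0) = 0$, we have
\[
\int g\,d\mu = \sum_{j\ne 0} \widehat g(j)\,\overline{\widehat\mu(j)}
\]
(up to conjugation convention). This series converges absolutely because $g$ is smooth. Therefore
\[
\frac{c}{2} \le \sup_{j\ne0}|\widehat\mu(j)| \cdot \sum_{j\ne0}|\widehat g(j)|.
\]

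\textbf{Step 3: bound $\sum_{j \neq 0} |\widehat g(j)|$.} We have $\widehat g(j) = \widehat f(j)\widehat\phi_\delta(j)$ with $|\widehat f(j)| \lesssim \min(|I|,1/|j|)$. Rather than summing these directly, use Cauchy--Schwarz together with Parseval for $g$ and $g'$. Since $\|g\|_2^2 \le 2|I|$ and $\|g'\|_2^2 \lesssim 1/\delta$, splitting the sum at $|j|\sim N$ gives
\[
\sum_{j\ne0}|\widehat g(j)| \lesssim \|g\|_2 N^{1/2} + \|g'\|_2 N^{-1/2}.
\]
Optimizing in $N$ yields the bound $\lesssim (|I|/\delta)^{1/4} \le \delta^{-1/4}$.

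\textbf{Step 4: choose the parameters.} With the hard-hitting bump of width $\delta\sim c$, the crude estimate gives $\sup_{j\neq 0}|\widehat\mu(j)| \gtrsim c\,\delta^{1/4}$, which is already $\gtrsim c^{5/4}$. To reach exactly $c^2$, accept the weaker mass control from Step 1: there, $\delta\sim c^2$ is needed so that the pigeonholed windows fit. This gives
\[
\sup_{j\ne0}|\widehat\mu(j)| \gtrsim c\cdot c^{1/2}\ \ge\ c^2,
\]
since $c\le 1$. Any power of $c$ at most $2$ suffices, so the constants have slack.

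The main care point is therefore Step 1: producing intervals with the same length and with small mass near their endpoints, while keeping the discrepancy of order $c$. The Fourier part is routine.
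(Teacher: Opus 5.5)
\textbf{There is a genuine gap in Step~1.} You correctly single out Step~1 as the main obstacle, but the pigeonhole sketch does not resolve it.

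Pigeonholing over $\sim 10/c$ windows bounds the $\mu$-mass of the window where a new endpoint lands. But $\mu(I')-\mu(J')$ changes by the mass \emph{swept} while moving the endpoint from its old position to that window, and nothing in your argument controls that. Moreover, Step~2 needs $|I'|=|J'|$ exactly, so you cannot move every endpoint in the harmless direction: if $I'\supseteq I$ and $J'\subseteq J$ with some endpoint actually moved, then $|I'|>|J'|$.

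Here is a concrete failure. Take $I=[0.1,0.3)$, $J=[0.5,0.7)$, $0<\eta\ll\delta$, and let $\mu$ be $(1-2c)\mathcal{L}$ plus point masses $c$ at $0.1+\eta$ and at $0.5-\eta$.
\begin{itemize}
\item Then $\mu(I)-\mu(J)=c$.
\item For the unmodified intervals, $\int g\,d\mu=c\int_{-\eta}^{\eta}\phi_\delta\to 0$ as $\eta\to 0$, so two-sided smoothing really can destroy the discrepancy.
\item Every common shift $I+t$, $J+t$, and every equal trimming or enlargement of $I$ and $J$, that keeps both atoms out of $E_\delta$ puts the atom at $0.1+\eta$ in $I'$ exactly when it puts the atom at $0.5-\eta$ in $J'$. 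The Lebesgue parts cancel, so $\mu(I')-\mu(J')=0$.
\end{itemize}
For this particular $\mu$ you could instead shift $I$ left and $J$ right. But that means choosing a direction for each endpoint adapted to $\mu$. With exactly equal lengths, some endpoint must still move in the direction that shrinks $I$ or enlarges $J$. You would then need an argument that the mass swept there is small, and your sketch gives none.

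\textbf{The missing idea, and what the paper does.} You do not need $\widehat g(0)=0$, only $\widehat g(0)$ small compared with $c$. Once you allow that, you can approximate one-sidedly. Assume $\mu(I)-\mu(J)\ge c$. Take a smooth $\phi\ge 0$ with $\int\phi=1$, $0<\varepsilon<c/8$, $I_\varepsilon$ the $\varepsilon$-enlargement of $I$, and $J_\varepsilon$ the $\varepsilon$-shrinking of $J$. Then $\chi_I\le \chi_{I_\varepsilon}*\phi_\varepsilon$ and $\chi_J\ge\chi_{J_\varepsilon}*\phi_\varepsilon$ pointwise. With $g=(\chi_{I_\varepsilon}-\chi_{J_\varepsilon})*\phi_\varepsilon$ this gives $c\le\int g\,d\mu\le 4\varepsilon+\sup_{j\ne 0}|\widehat\mu(j)|\sum_{n\ne0}|\widehat g(n)|$. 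This holds no matter how much mass $\mu$ puts near the endpoints, so no pigeonholing is needed.

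Your windows would also work if you moved the endpoints of $I$ only outward and those of $J$ only inward, each by at most a small multiple of $c$, and dropped exact equality of lengths. That is a clumsier form of the same one-sidedness.

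Your Fourier estimates then go through unchanged. In fact your bound $\sum_{n\ne0}|\widehat g(n)|\lesssim\varepsilon^{-1/4}$ with $\varepsilon\sim c$ gives $\sup_{j\ne0}|\widehat\mu(j)|\gtrsim c^{5/4}$. This is better than the $c^2$ the paper gets from the cruder estimate $\sum_{n\ne0}|\widehat\phi(n\varepsilon)|\lesssim 1/\varepsilon$.
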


\begin{proof}
By definition of the characteristic function,
\[
\mu(I)-\mu(J)=\int\chi_I(x)\,d\mu(x)-\int\chi_J(x)\,d\mu(x).
\]
Suppose \(0<\varepsilon<\frac{c}{8}\). Let \(I_\varepsilon\) be the interval obtained by enlarging \(I\) by \(\varepsilon\) at both ends, 
and \(J_\varepsilon\) be the interval obtained by shrinking \(J\) by \(\varepsilon\) at both ends.  

Let \(\phi\in C^{\infty}(\mathbb{R})\) with \(\mathrm{spt}\,\phi\subseteq(-1,1)\) and \(\int\phi=1\). Defining
\[
\phi_{\varepsilon}(x)=\frac{1}{\varepsilon}\phi\left(\frac{x}{\varepsilon}\right).
\]
Then $
\mathrm{spt}\,\phi_{\varepsilon}\subseteq(-\varepsilon,\varepsilon)$ and $ \int\phi_{\varepsilon}=1$. 
Observe that 
\[
\chi_I(x)\le\chi_{I_\varepsilon}\ast\phi_\varepsilon(x),\quad \chi_J(x)\ge\chi_{J_\varepsilon}\ast\phi_\varepsilon(x).
\]
It follows that
\[
\begin{aligned}
\int\chi_I\,d\mu-\int\chi_J\,d\mu
&\le\int\chi_{I_\varepsilon}\ast\phi_\varepsilon\,d\mu-\int\chi_{J_\varepsilon}\ast\phi_\varepsilon\,d\mu\\
&=\sum_{n\in\mathbb{Z}}\widehat{\chi_{I_\varepsilon}}(n)\widehat{\phi}(n\varepsilon)\overline{\widehat{\mu}}(n)
-\sum_{n\in\mathbb{Z}}\widehat{\chi_{J_\varepsilon}}(n)\widehat{\phi}(n\varepsilon)\overline{\widehat{\mu}}(n)\\
&=|I_\varepsilon|-|J_\varepsilon|
+\sum_{n\neq 0}\bigl(\widehat{\chi_{I_\varepsilon}}(n)-\widehat{\chi_{J_\varepsilon}}(n)\bigr)\widehat{\phi}(n\varepsilon)\widehat{\mu}(-n)\\
&\lesssim\frac{c}{2}+\sup_{j\neq 0}\bigl|\widehat{\mu}(j)\bigr|\cdot\sum_{n\neq 0}\bigl|\widehat{\phi}(n\varepsilon)\bigr|.
\end{aligned}
\]
By the decay property of Fourier transform of smooth functions, we have
\[
\sum_{n\neq 0}\bigl|\widehat{\phi}(n\varepsilon)\bigr|
\le\sum_{n=1}^\infty\frac{1}{(1+n\varepsilon)^2}
\le\sum_{0<n<1/\varepsilon}\frac{1}{(1+n\varepsilon)^2}
+\sum_{n\ge 1/\varepsilon}\frac{1}{(1+n\varepsilon)^2}
\lesssim\frac{1}{\varepsilon}.
\]
Thus for any \(0<\varepsilon<\frac{c}{8}\),
\[
c\le\mu(I)-\mu(J)\lesssim\frac{c}{2}+\sup_{j\neq 0}\bigl|\widehat{\mu}(j)\bigr|\cdot\frac{1}{\varepsilon}.
\]
By the choice of $\varepsilon$, we have
\[
\sup_{j\in\mathbb{Z},\,j\neq 0}\bigl|\widehat{\mu}(j)\bigr|\gtrsim c^2.
\]
This completes the proof.
\end{proof}

\begin{proof}[Proof of Theorem \ref{thm} (3)]
Since $\nu\neq \delta_{1/2}$, there is an interval $A=[a,b]\subset (0,1)$ such that $1/2\notin A$ and $\nu(A)>0$. Let
\[
\Omega'=\{\p=(p_1, p_2, \ldots): \exists \text{ infinitely many } k \text{ such that } p_k\in A\}.
\]
Since the random variables $p_1, p_2, \ldots $ are independent and indentically distributed, by Borel-Cantelli we obtain that $\PP(\Omega')=1$. We intend to show  that for each $\p\in \Omega'$, 
\[
\widehat{\mu_\p}(\xi)\nrightarrow 0 \text{ as } \xi \rightarrow \infty.
\]

Let $k\in \N$ such that $p_k\in A$. Define a map $f_k\colon [0,1)\to[0,1)$,
\[
f_k(x)=2^kx \mod 1,
\]
which maps every dyadic interval of level \(k\) onto \([0,1)\). This naturally induces the pushforward measure
\[
(f_k)_*\mu_{\p}=\mu_{\p}\circ f_k^{-1}.
\]
Then we have
\[
\left|(f_k)_*\mu_{\p}\Bigl(\bigl(0,\frac{1}{2}\bigr)\Bigr)-(f_k)_*\mu_{\p}\Bigl(\bigl(\frac{1}{2},1\bigr)\Bigr)\right|=|2p_k-1|\ge \min\{|2a-1|, |2b-1|\}>0.
\]
By Lemma \ref{Fourier}, there exists an absolute  constant \(c>0\) such that
\[
\sup_{j\in\mathbb{Z},\,j\neq 0}\bigl|\widehat{(f_k)_*\mu_{\p}}(j)\bigr|\ge c.
\]
Hence there exists \(j_k\in \Z\setminus \{0\}\) satisfying $|\widehat{(f_k)_*\mu_{\p}}(j_k)|\ge c/2$. Observe that for any $j\in \Z$, 
\[
 \widehat{(f_k)_*\mu_{\p}}(j)=\widehat{\mu}_{\p}(2^kj). 
\]
For each $\p\in \Omega'$, we conclude that there are infinitely many $k\in \N, j_k\in \Z\setminus \{0\}$ such that 
\[
|\widehat{\mu}_{\p}(2^k j_k)|\ge c/2.
\]
Thus $\widehat{\mu_{\p}}(\xi)\nrightarrow 0$ as $|\xi|\to\infty$ which finishes the proof.
\end{proof}

\subsection{Almost surely singularity}

We now prove Theorem~\ref{thm}(4). The proof relies on a dyadic version of the density theorem and a convexity argument. Let $\eta, \mu$ be two Radon measures. Recall that for two Radon measures $\eta$ and $\mu$, mutual singularity is characterized by the following density limit: 
\[
\lim_{r\rightarrow 0} \frac{\eta(B(x, r)}{\mu(B(x,r))}. 
\]
In our setting, it is convenient to work with dyadic intervals rather than balls. For $x\in[0,1)$ and $n\in\mathbb{N}$, let $I_n(x)\in\mathcal{D}_n$ be the unique dyadic interval of length $2^{-n}$ containing $x$. The following dyadic version of the density results  will be needed. For the classical version with balls, see \cite[p36, p43]{Mattila1995}.

\begin{lemma}
\label{lem:singular}
Let $\eta, \mu$ be two Radon measures on $[0,1)$, then for $\mu \as x$, 
\[
\lim_{n\rightarrow \infty} \frac{\eta(I_n(x))}{\mu(I_n(x))} \quad \text{exists}.
\]
Moreover, they are mutually singular if and only if for $\mu \as x$, 
\[
\lim_{n\rightarrow \infty}\frac{\eta(I_n(x))}{\mu(I_n(x))}=0.
\]
\end{lemma}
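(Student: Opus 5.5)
The plan is to prove the dyadic differentiation result by viewing the ratio $\eta(I_n(x))/\mu(I_n(x))$ as a martingale with respect to $\mu$ and the dyadic filtration. Then the only remaining issue is identifying its limit through the Lebesgue decomposition.

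First, the existence of the limit. Let $\mathcal{F}_n$ be the $\sigma$-algebra generated by $\D_n$. Define $f_n(x)=\eta(I_n(x))/\mu(I_n(x))$ on the set where $\mu(I_n(x))>0$. The exceptional set $\{x:\mu(I_n(x))=0 \text{ for some } n\}$ is a countable union of $\mu$-null dyadic intervals, so it is $\mu$-null. For $I\in\D_n$ with $\mu(I)>0$ we have
\[
\int_I f_{n+1}\,d\mu=\sum_{J\in\D_{n+1},\,J\subset I,\,\mu(J)>0}\eta(J)\le \eta(I)=\int_I f_n\,d\mu .
\]
Hence $(f_n)$ is a nonnegative supermartingale on $([0,1),\mu)$; it is a martingale when $\eta\ll\mu$. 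By Doob's supermartingale convergence theorem, $f_n$ converges $\mu$-a.e. to a finite limit $f$.

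Second, the characterization. Write $\eta=\eta_a+\eta_s$ with $\eta_a=g\,d\mu\ll\mu$ and $\eta_s\perp\mu$. For the absolutely continuous part, $\eta_a(I_n(x))/\mu(I_n(x))=\E_\mu(g\mid\mathcal{F}_n)(x)$, since dyadic intervals generate the Borel $\sigma$-algebra. Martingale convergence (L\'evy's upward theorem for $g\in L^1(\mu)$) then shows that this ratio tends to $g(x)$ $\mu$-a.e.

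For the singular part, I must show $\eta_s(I_n(x))/\mu(I_n(x))\to0$ $\mu$-a.e. Fix a Borel set $B$ with $\mu(B)=0$ and $\eta_s([0,1)\setminus B)=0$. For $\lambda>0$, consider the set $E_\lambda$ of $x\notin B$ with $\limsup_n \eta_s(I_n(x))/\mu(I_n(x))>\lambda$. I will run a dyadic Vitali-type covering argument: dyadic intervals are nested, so the maximal such intervals are disjoint. This gives, for any open $U\supset E_\lambda$,
\[
\mu(E_\lambda)\le \lambda^{-1}\eta_s(U).
\]
Here $U$ is approximated by the union of small maximal dyadic intervals inside it, with care at dyadic endpoints, which form a countable set. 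By outer regularity of $\eta_s$, and since $\eta_s(E_\lambda)=0$, we get $\mu(E_\lambda)=0$. Therefore the limit $f$ equals $g$ $\mu$-a.e.

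Finally, the equivalence. If $\eta\perp\mu$, then $g=0$ $\mu$-a.e., so the limit is $0$ $\mu$-a.e. Conversely, if the limit is $0$ $\mu$-a.e., then $g=0$ $\mu$-a.e., so $\eta_a=0$ and $\eta=\eta_s\perp\mu$.

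I expect the main obstacle to be the singular-part estimate, specifically getting the maximal dyadic intervals to sit inside an open set of small $\eta_s$-measure. I would handle this by choosing $U$ open with $\eta_s(U)$ close to $\eta_s(E_\lambda)=0$, restricting to dyadic intervals contained in $U$ (possible for all $x\in U$ at large $n$), and taking the maximal ones among those where the ratio exceeds $\lambda$.
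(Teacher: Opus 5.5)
Your proof is correct, and it takes a different route from the paper. The paper gives no proof of this lemma. It only points to the classical ball versions in Mattila's book (pp.~36, 43), which rest on the Besicovitch covering theorem, and leaves the dyadic transfer implicit. The dyadic statement is not a formal consequence of the ball statement, since $I_n(x)$ is not centred at $x$ and $\mu$ need not be doubling. So a separate argument is needed, and yours is a self-contained one.

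\begin{itemize}
\item L\'evy's upward theorem handles $\eta_a$. This works because $\eta_a(I_n(x))/\mu(I_n(x))=\E_\mu(g\mid\mathcal{F}_n)(x)$ and the dyadic intervals generate the Borel sets.
\item For $\eta_s$, the nestedness of dyadic intervals makes the covering step trivial: distinct maximal intervals are automatically disjoint. This replaces Besicovitch entirely.
\end{itemize}

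A few small remarks.
\begin{itemize}
\item Your first step via Doob's supermartingale theorem is redundant. Existence of the limit already follows once both pieces of the Lebesgue decomposition converge.
\item No care at dyadic endpoints is needed. Since $I_n(x)\subset(x-2^{-n},x+2^{-n})$, you have $I_n(x)\subset U$ for all large $n$ whenever $x\in U$.
\item You use finiteness of $\eta$ and $\mu$ (for Doob's theorem, for $g\in L^1(\mu)$, and for outer regularity). This is implicit in the paper's setting; in the application both are probability measures.
\end{itemize}

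If you prefer a purely martingale-theoretic proof, the singular part can be done the same way. Set $\rho=\mu+\eta_s$ and note that $d\mu/d\rho=\chi_{[0,1)\setminus B}$ and $d\eta_s/d\rho=\chi_B$. Apply L\'evy's theorem under $\rho$ to both ratios $\mu(I_n(x))/\rho(I_n(x))$ and $\eta_s(I_n(x))/\rho(I_n(x))$. Taking the quotient gives $\eta_s(I_n(x))/\mu(I_n(x))\to 0$ for $\mu$-a.e.\ $x$.
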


We now establish a convexity estimate that will be used in the proof of singularity.

\begin{lemma}
\label{lem:convex}
Let $\nu\in \cP((0,1))$ such that $\nu$ is not a Dirac measure, then there exists constant $0<\rho<1$ such that 
\[
\left (\int p^{\frac{1}{2}} d\nu(p)\right )^{2} +\left (\int (1-p)^{\frac{1}{2}} d\nu(p) \right )^{2}\le \rho. 
\]
\end{lemma}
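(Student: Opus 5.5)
The plan is to apply Cauchy--Schwarz to the pair of vectors $(\int p^{1/2}d\nu, \int (1-p)^{1/2}d\nu)$, or more directly to use Jensen's inequality for the strictly concave square root. Write $m=\int p\,d\nu(p)\in(0,1)$. By Jensen, $\left(\int p^{1/2}d\nu\right)^2\le \int p\,d\nu = m$ and $\left(\int (1-p)^{1/2}d\nu\right)^2\le 1-m$. Adding these gives the bound $1$, so the whole task is to show that at least one inequality is strict. Once that is done, $\rho$ can be taken to be the left-hand side itself, which is a fixed number depending only on $\nu$. It is clearly positive, and it will be shown to be strictly less than $1$.

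Strictness in Jensen's inequality for $t\mapsto t^{1/2}$ is the key step. Equality $\left(\int p^{1/2}d\nu\right)^2=\int p\,d\nu$ means $\operatorname{Var}_\nu(p^{1/2})=0$, since $\int (p^{1/2})^2 d\nu - (\int p^{1/2}d\nu)^2$ is exactly that variance. So equality holds if and only if $p^{1/2}$ is $\nu$-a.s. constant, that is, if and only if $\nu$ is a Dirac measure. Since $\nu$ is assumed not to be Dirac, $\operatorname{Var}_\nu(p^{1/2})>0$, and hence
\[
\left(\int p^{1/2}d\nu\right)^2+\left(\int (1-p)^{1/2}d\nu\right)^2 \le 1-\operatorname{Var}_\nu(p^{1/2}) <1 .
\]
We then set $\rho$ equal to the left-hand side, or to $1-\operatorname{Var}_\nu(p^{1/2})$, which lies in $(0,1)$.

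The main point needing care is this variance identity, which makes the strict inequality quantitative rather than just qualitative. All integrals are finite because the integrands are bounded by $1$. The value $\rho>0$ is automatic, since $\int p^{1/2}d\nu>0$ because $\nu$ is supported on $(0,1)$. No earlier result of the paper is needed.
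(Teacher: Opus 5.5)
Your proof is correct and follows essentially the same route as the paper, which also uses strict Jensen (strict convexity of $x\mapsto x^2$) to get $(\int p^{1/2}d\nu)^2<\int p\,d\nu$ and $(\int(1-p)^{1/2}d\nu)^2<\int(1-p)\,d\nu$, then sums these so the left-hand side is strictly below $1$. Your variance identity just makes the gap explicit, and you rightly note that one strict inequality already suffices.
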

\begin{proof}
Since $\nu$ is not a Dirac measure, the functions $p \mapsto p^{1/2}$ and $p \mapsto (1-p)^{1/2}$ are not $\nu$-almost everywhere constant. By the strict convexity of $x \mapsto x^2$, we obtain the strict inequalities
\[
\left (\int p^{\frac{1}{2}} d\nu(p)\right )^{2}<\int p d\nu(p) \text{ and }  \left (\int (1-p)^{\frac{1}{2}} d\nu(p) \right )^{2}<\int (1-p) d\nu(p),
\]
which proves the inequality.
\end{proof}

\begin{proof}[Proof of Theorem \ref{thm} (4)]
By Lemma \ref{lem:singular}, for $\mu_{\p} \as x$,
\[ 
\lim_{n\rightarrow \infty} \frac{\eta(I_n(x))}{\mu_{\p}(I_n(x))} \quad \text{exists}.
\]
Thus it is sufficient to show that 
\begin{equation}
\label{eq:0}
\mathbb{E} \int_{[0,1)} \lim_{n\rightarrow \infty}\left( \frac{\eta(I_n(x))}{\mu_{\p}(I_n(x))} \right)^{1/2} d\mu(x)=0.
\end{equation}
Indeed, this will imply that almost surely for $\mu_\p \as x$,
\[
\lim_{n\rightarrow \infty}\frac{\eta(I_n(x))}{\mu_\p(I_n(x))}=0.
\]
Combining with Lemma \ref{lem:singular} will yields the desired result. We now turn to the proof of identity \eqref{eq:0}. By Fatou's lemma,  the definition of $I_n(x)$ and Cauchy-Schwarz, we derive
\begin{align*}
\mathbb{E} \int_{[0,1)} \lim_{n\rightarrow \infty}\left( \frac{\eta(I_n(x))}{\mu_\p(I_n(x))} \right)^{1/2} d\mu_\p(x) & \le \liminf_{n\rightarrow \infty} \mathbb{E} \int_{[0,1)} \left( \frac{\eta(I_n(x))}{\mu_\p(I_n(x))} \right)^{1/2} d\mu_\p(x)\\
& = \liminf_{n\rightarrow \infty} \mathbb{E} \sum_{I\in \D_n} \eta(I)^{1/2}\mu_\p(I)^{1/2}\\
&\le \liminf_{n\rightarrow \infty} \eta([0,1))^{1/2} \left (\sum_{I\in \D_n} \left (\E(\mu_\p(I)^{1/2})\right)^{2} \right )^{1/2}. 
\end{align*}
We now estimate the term $\E \left (\mu_\p(I)^{1/2}\right )$. For $\p=(p_1, p_2, \ldots)$ and  $I_n(x)=[x_1,\ldots,x_n]$, we have 
\[
\mu_\p(I_n(x))=\prod_{k=1}^{n}(1-p_k)^{1-x_k}p_k^{x_k}.
\]
Since the random variables $p_1, p_2, \ldots $ are independent and indentically distributed, and $x_k= 0$ or $1, k\in \N$, we deduce 
\[
\E \left (\mu_\p(I_n(x))^{1/2}\right )=\prod_{k=1}^{n}\E \left ((1-p_k)^{(1-x_k)/2}p_k^{x_k/2}\right )=\prod_{k=1}^{n} a^{1-x_k}b^{x_k},
\]
where 
\[
a= \E((1-p)^{1/2}) \text{ and } b= \E(p^{1/2}).
\]
Combining with Lemma \ref{lem:convex}, we obtain 
\[
\sum_{I\in \D_n} \left (\E(\mu_\p(I)^{1/2})\right)^{2}=  \sum_{I\in \D_n} \prod_{k=1}^{n} a^{2(1-x_k)}b^{2x_k}=(a^2+b^2)^{n}\le \rho^{n}
\]
where $0<\rho<1$ is constant, and this yields the identity \eqref{eq:0} which finishes the proof. 
\end{proof}

\subsection{Dichotomy property of normal numbers}

Finally, we prove Theorem~\ref{thm}(5) and (6). We begin with a preparatory lemma.

\begin{lemma}\label{regular number}
Suppose that $\int p d\nu(p)=1/2$ and $F\subseteq [0,1]$ with $\mathcal{L}(F)=0$, then almost surely $\mu_\p(F)=0$.
\end{lemma}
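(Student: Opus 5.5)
The plan is to average the random measure over the environment: compute $\E(\mu_\p(F))$ by Fubini and show it is $0$. Since $\mu_\p(F)\ge 0$, this gives $\mu_\p(F)=0$ for $\PP$-a.e.\ $\p$. The key observation is that the expected measure $\bar\mu(B):=\E(\mu_\p(B))$ is Lebesgue measure on $[0,1)$ when $\int p\,d\nu(p)=1/2$.

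First compute $\bar\mu$ on dyadic intervals. For $I=I(x_1,\ldots,x_n)\in\D_n$, the formula \eqref{eq:mupn} and the independence of $p_1,\ldots,p_n$ give
\[
\E\bigl(\mu_\p(I)\bigr)=\prod_{k=1}^n \E\bigl(p_k^{x_k}(1-p_k)^{1-x_k}\bigr)=\prod_{k=1}^n \Bigl(\tfrac12\Bigr)=2^{-n}=\mathcal{L}(I),
\]
because $\E(p_k)=\E(1-p_k)=1/2$.

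Next, justify that $\bar\mu$ is a Borel probability measure and identify it with $\mathcal{L}$. Measurability of $\p\mapsto\mu_\p(B)$ for Borel $B$ holds for dyadic $B$, since it is a finite product of coordinates. It then extends to all Borel $B$ by a monotone class ($\pi$–$\lambda$) argument, because the class of $B$ for which it is measurable is a $\lambda$-system containing the $\pi$-system of dyadic intervals. Countable additivity of $\bar\mu$ follows from monotone convergence. Two probability measures on $[0,1)$ that agree on the $\pi$-system of dyadic intervals, which generates the Borel $\sigma$-algebra, coincide, so $\bar\mu=\mathcal{L}$ on $[0,1)$.

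Finally, let $F\subseteq[0,1]$ with $\mathcal{L}(F)=0$. Enlarge $F$ to a Borel ($G_\delta$) set $G\supseteq F$ with $\mathcal{L}(G)=0$; the point $1$ is irrelevant since $\mu_\p$ lives on $[0,1)$. Then $\E(\mu_\p(G))=\mathcal{L}(G\cap[0,1))=0$, so $\mu_\p(G)=0$ almost surely, and hence $\mu_\p(F)=0$ (in the sense of outer or completed measure). The only mildly delicate step is this measurability and monotone-class point; the main computation is the one-line product formula.
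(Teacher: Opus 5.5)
Your proof is correct and uses the same core computation as the paper: independence and $\int p\,d\nu=1/2$ make each level-$n$ product average to $2^{-n}$. The measure-theoretic route, however, is different.

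The paper never identifies the mean measure. It covers $F$ by an open $U$ with $\mathcal{L}(U)<\varepsilon$ and applies the Portmanteau inequality $\mu_\p(U)\le\liminf_n\mu_{\p,n}(U)$ to the absolutely continuous approximants $\mu_{\p,n}$. It then gets $\E\mu_{\p,n}(U)=\mathcal{L}(U)$ by Fubini and concludes with Fatou and $\varepsilon\to0$.

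You instead identify the mean measure outright, $\E\mu_\p=\mathcal{L}$ on Borel sets, via the $\pi$--$\lambda$ uniqueness theorem. Along the way you handle two points the paper leaves implicit: measurability of $\p\mapsto\mu_\p(B)$, and the possibility that $F$ is not Borel, which you settle with a null Borel hull.

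In exchange, your argument needs $\mu_\p(I(x_1,\ldots,x_n))=\prod_{k\le n}p_k^{x_k}(1-p_k)^{1-x_k}$ for the weak limit itself, and not just for the approximants. That requires $\mu_\p$ to put no mass on dyadic endpoints. This holds for $\PP$-a.e.\ $\p$, for instance because $\prod_k\max\{p_k,1-p_k\}=0$ almost surely, and the paper uses this identity freely in its other parts. So it is a remark rather than a gap. The Portmanteau route avoids the issue entirely, since it only ever evaluates the explicit $\mu_{\p,n}$.
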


\begin{proof}
Let $\varepsilon >0$, then there exists an open set $U\supseteq F$ such that $\mathcal{L}(U)<\varepsilon$. Let $\mu_{\p,n}$ be given by  \eqref{eq:mupn}, then $\mu_{\p,n}$ converges weakly to a measure $\mu_\p$. By Portmanteau theorem \cite[Theorem 1.24]{Mattila1995}, we have 
    \[
    \mu_\p(U) \le \liminf_{n \to \infty} \mu_{\p, n}(U).
    \]
Note that 
\[
\mu_{\p, n}(U) = \int \chi_U(x) \prod_{k=1}^n p_k^{x_k}(1-p_k)^{1-x_k} 2^{n} dx.
\]
Since $\int p d\nu(p)=1/2$ and $\{p_k\}$ are i.i.d. random variables, we obtain 

\begin{align*}
\mathbb{E}\left( \mu_{\p, n}(U) \right) &= \int \chi_U(x) \prod_{k=1}^n \mathbb{E}\left( p_k^{x_k}(1-p_k)^{1-x_k} \right) 2^n dx \\
&= \int \chi_U(x) dx \\
&< \varepsilon.
\end{align*}
Combining with Fatou's lemma,

\[
\mathbb{E}\left( \mu_\p(F) \right) \leq \mathbb{E}\left( \liminf_{n \to \infty} \mu_{\p, n}(U) \right) \leq \liminf_{n \to \infty} \mathbb{E}\left( \mu_{\p, n}(U) \right) \leq \varepsilon.
\]
By the arbitrary choice of $\varepsilon$, we finish the proof.
\end{proof}

\begin{proof}[Proof of Theorem \ref{thm} (5)] Borel  normal number theorem claims that almost every number $x$ is normal in the sense of Lebesgue measure.  Thus we obtain $\mathcal{L}(F) = 0$ where
\[
F = \{ x \in [0,1] : x \text{ is not a normal number} \}.
\]
 Combining with Lemma \ref{regular number}, we derive that $\mu_\p(F) = 0$ for $\PP \as \p$ which implies that  $\mu_p \as x $ is normal.
\end{proof}

\begin{proof}[Proof of Theorem \ref{thm} (6)]
We take the dyadic expansion of each  $x\in [0,1)$. Let $\p=(p_1,p_2,\ldots)$, then the law of large numbers implies that for $\mu_\p \as x=(x_1,x_2,\ldots)\in [0,1)$, 
\begin{equation}
\label{eq,1}
\lim_{n\rightarrow \infty}\frac{1}{n}\sum_{k=1}^n(x_k-p_k) =0.
\end{equation}
Again by law of large numbers, for $\mathbb{P} \as \p=(p_1,p_2,\ldots)$, we obtain
    \[
    \lim_{n\rightarrow \infty} \frac{1}{n}\sum_{k=1}^n p_k = \int p d\nu(p)\neq \frac{1}{2}.
    \]
Combining with \eqref{eq,1}, we conclude that for $\PP \as  \p=(p_1, p_2, \ldots)\in \Omega$ and $\mu_\p \as  x=(x_1,x_2,\ldots)\in [0,1)$,
    \[
   \lim_{n\rightarrow \infty} \frac{1}{n}\sum_{k=1}^n x_k   \neq  \frac{1}{2},
    \]
which implies that $\mu_\p \as x\in [0,1)$ is not normal. 
\end{proof}

\section*{acknowledgement}
This work was supported by the National Natural Science Foundation of China Grant 12101002.

\end{document}